\begin{document}

\newcommand{\s}{\sigma}
\renewcommand{\k}{\kappa}
\newcommand{\p}{\partial}
\newcommand{\D}{\Delta}
\newcommand{\om}{\omega}
\newcommand{\Om}{\Omega}
\renewcommand{\phi}{\varphi}
\newcommand{\e}{\epsilon}
\renewcommand{\a}{\alpha}
\renewcommand{\b}{\beta}
\newcommand{\N}{{\mathbb N}}
\newcommand{\R}{{\mathbb R}}
\newcommand{\eps}{\varepsilon}
\newcommand{\EX}{{\Bbb{E}}}
\newcommand{\PX}{{\Bbb{P}}}

\newcommand{\cF}{{\cal F}}
\newcommand{\cG}{{\cal G}}
\newcommand{\cD}{{\cal D}}
\newcommand{\cO}{{\cal O}}

\newcommand{\de}{\delta}

\newcommand{\grad}{\nabla}
\newcommand{\n}{\nabla}
\newcommand{\curl}{\nabla \times}
\newcommand{\dive}{\nabla \cdot}

\newcommand{\ddt}{\frac{d}{dt}}
\newcommand{\la}{{\lambda}}

\newtheorem{theorem}{Theorem}[section]
\newtheorem{lemma}{Lemma}[section]
\newtheorem{remark}{Remark}[section]
\newtheorem{example}{Example}[section]
\newtheorem{definition}{Definition}[section]
\newtheorem{corollary}{Corollary}[section]
\newtheorem{assumption}{Assumption}[section]
\newtheorem{prop}{Proposition}[section]
\newtheorem{notation}{Notation}
\makeatletter
\@addtoreset{equation}{section}
\makeatother
\renewcommand{\theequation}{\arabic{section}.\arabic{equation}}

\makeatletter\def\theequation{\arabic{section}.\arabic{equation}}\makeatother

\title{A Wong-Zakai Approximation of Stochastic Differential Equations Driven by a General Semimartingale}

\author{
{ \bf\large Xianming Liu}\\
{\it\small School of Mathematics and Statistics},\\
{\it\small Huazhong University of Sciences and Technology},
{\it\small Wuhan 430074,  China}\\
{\it\small E-mail:    xmliu@hust.edu.cn}\vspace{1mm}\\ \\
{ \bf\large Guangyue Han}
\vspace{1mm}\vspace{1mm}\\
{\it\small  Department of Mathematics},\\
{\it\small The University of Hong Kong},
{\it\small Hong Kong, China}\\
{\it\small E-mail:    ghan@hku.hk}\vspace{1mm}}

\date{\today }
\maketitle

\begin{abstract}
We examine a Wong-Zakai type approximation of a family of stochastic differential equations driven by a general c\`{a}dl\`{a}g
semimartingale. For such an approximation, compared with the pointwise convergence result by Kurtz, Pardoux and Protter~\cite[Theorem 6.5]{Kurtz}, we establish stronger convergence results under the Skorokhod $M_1$-topology, which, among other possible applications, implies the convergence of the first passage time of the solution to the stochastic differential equation.
\end{abstract}

\noindent {\it \small Key words}: {\small Wong-Zakai approximation, stochastic differential equation, semimartingale,
the Skorokhod $M_1$-topology, random time change}

\setcounter{secnumdepth}{5} \setcounter{tocdepth}{5}

% you can put a figure caption inside a table environment
% by command \figcaption.
% you can put a table caption inside a figure environment
% by command \tabcaption.
\makeatletter
    \newcommand\figcaption{\def\@captype{figure}\caption}
    \newcommand\tabcaption{\def\@captype{table}\caption}
\makeatother

\section{Introduction}  \label{S1}
%%%%%%%%%%%%%%%%%%%%%%%%%%%%%%%%%%%%%%%%%%%%%%%%%%%%%%%%%%%%%%%%%
Let $L=\{L(t); 0\leq t < \infty\}$ be a stochastically continuous
c\`{a}dl\`{a}g semimartingale~\cite{Protter} defined on a
probability space $(\Omega, \mathcal{F}, \mathcal{F}_t,
\mathbb{P})$. For any $\epsilon > 0$, let $L^\epsilon$ be the smooth
approximation~\cite{Ikeda,Kurtz,Zhang13} of $L$ defined by
\begin{equation} \label{apps1}
L^\epsilon(t) :=  \frac{1}{\epsilon}\int_{t-\epsilon}^t L(s)
ds, \quad 0 \leq t < \infty,
\end{equation}
and let $X^\epsilon=\{X^\epsilon(t); 0\leq t < \infty\}$ be the solution to the following random differential equation
\begin{equation} \label{mainrde}
dX^{\epsilon} (t) = b(X^{\epsilon}(t))dt +f(X^{\epsilon} (t)) d
L^\epsilon(t),  \quad X^{\epsilon} (0)=X_0,
\end{equation}
where $b(\cdot), f(\cdot)$ are some functions from $ \mathbb{R}$ to $\mathbb{R}$ satisfying certain regularity conditions, and $X_0$ is an $\mathcal{F}_0$-measurable random variable.

In this paper, we are concerned with the convergence behavior of $X^{\epsilon}$ as $\epsilon$ tends to $0$. Since the equation~\eqref{mainrde} is a perturbed version of \eqref{mainsde} in the sense of Wong-Zakai~\cite{Hairer,Kurtz,Marcus,Tes,WZ65a,WZ65b}, one naturally speculates that $X^{\epsilon}$ converges to $X$ in some sense, where $X$ is the solution to the following stochastic differential equation:
\begin{equation}\label{mainsde}
X(t) = X_0+ \int_0^t b(X(s))ds + \int_0^tf(X(s-))\diamond  d L(s),
\end{equation}
where $\diamond$ denotes Marcus integral. Note that the equation (\ref{mainsde}) is in fact a Marcus canonical equation and can be equivalently rewritten as
\begin{eqnarray}\label{m1}
X(t)&=X_0+\int_0^t b(X(s))ds+ \int_0^tf(X(s))\circ dL^c(s)+ \int_0^tf(X(s-)) dL^d(s) \nonumber  \\
    &\quad +\sum_{0< s\leq t}[\phi(\Delta L(s)f;X(s-), 1)-X(s-)-f(X(s-))\Delta L(s)],
\end{eqnarray}
where $L^c$ and $L^d$ are respectively the continuous and discontinuous
parts of $L$, and $\circ$ denotes Stratonovich
differential, and furthermore $\phi(\sigma; u,t)$ is the flow generated by a
vector field $\sigma$:
\begin{equation}\label{m2}
\frac{d\phi(\sigma; u,t)}{dt}=f[\phi(\sigma; u,t)], \quad \phi(\sigma; u,0)=u.
\end{equation}
For more details about Marcus integral and canonical equations, we refer the reader to~\cite{Applebaum,Kunita 99,Kunita
96,Kunita2004,Kurtz}.

For the special case $b=0$, it has been shown by Kurtz, Pardoux and Protter~\cite{Kurtz} that for all but countably many $t$, $X^\epsilon(t)$ converges in probability to $X(t)$, as $\epsilon$ tends to $0$.  As detailed in the following theorem, we will show that for any $T > 0$, $\{X^{\epsilon}(t); 0 \leq t \leq T\}$ converges in probability to $\{X(t); 0 \leq t \leq T\}$ in $D([0, T], \R)$~\footnote{Following the usual practice in the theory of stochastic calculus, we will assume that the sample paths of any stochastic process in this paper are all c\'{a}dl\`{a}g.}, the space of all the c\`{a}dl\`{a}g functions over $[0, T]$, under the the Skorokhod $M_1$-topology, or simply put, $X^{\epsilon}$ converges in probability to $X$ under the Skorokhod $M_1$-topology. Here we remark that the same convergence is not possible under the Skorokhod $J_1$-topology due to the simple fact that convergence under the $J_1$-topology keeps the
continuity, whereas $X^{\epsilon}$ is continuous for any $\epsilon > 0$ and $X$ may be discontinuous. For the precise definitions of the Skorokhod $J_1$ and $M_1$-topologies, see Appendix~\ref{skorokhod}.
\begin{theorem}\label{maintheorem}
Suppose that the functions $b(\cdot)$, $f(\cdot)$ and $f'(\cdot)$ are bounded and Lipschitz. Then, as $\epsilon$ tends to $0$, $X^\epsilon$ converges in probability to $X$ under the Skorokhod $M_1$-topology.
\end{theorem}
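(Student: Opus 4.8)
The plan is to prove that the family $\{X^\epsilon\}_{\epsilon\in(0,1)}$ is tight in $D([0,T],\R)$ under the $M_1$-topology and then, invoking the pointwise-in-probability convergence $X^\epsilon(t)\to X(t)$ of~\cite[Theorem 6.5]{Kurtz}, to upgrade this to convergence in probability under $M_1$; throughout, the well-posedness of \eqref{mainrde} (classical ODE theory) and of the Marcus equation \eqref{m1} (the theory recalled in~\cite{Kurtz}) is taken for granted. The upgrade step is routine once tightness is in hand: any subsequential weak limit $Y$ of $X^\epsilon$ has, at the co-countably many times $t$ that are continuity points of $X$, the same one-dimensional (hence finite-dimensional) distributions as $X$---because evaluation maps are a.s.\ continuous under $M_1$ at continuity points of the limit and $X^\epsilon(t)\to X(t)$ in probability---so $Y\overset{d}{=}X$; applying the same reasoning to the (automatically tight) family of pairs $(X^\epsilon,X)$ shows that every subsequential weak limit of $(X^\epsilon,X)$ is supported on the diagonal, whence $(X^\epsilon,X)\Rightarrow(X,X)$, and since $d_{M_1}$ is $1$-Lipschitz on the product space, $d_{M_1}(X^\epsilon,X)\to 0$ in probability. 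Thus the entire burden is $M_1$-tightness, which by the standard criterion (cf.\ Appendix~\ref{skorokhod}) reduces to the a priori bound $\limsup_\epsilon\PX(\|X^\epsilon\|_{\infty,[0,T]}>a)\to 0$ as $a\to\infty$, two endpoint-oscillation estimates, and the oscillation estimate
\begin{equation*}
\lim_{\delta\to 0}\ \limsup_{\epsilon\to 0}\ \PX\big(w_{M_1}(X^\epsilon,\delta)\geq\alpha\big)=0,\qquad \alpha>0 .
\end{equation*}

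\textbf{The jumps, and the random time change.} The reason the oscillation estimate can hold---and the reason $M_1$, not $J_1$, is the natural topology---is one-dimensional. Near a jump time $s_0$ of $L$ one has, for $t\in[s_0,s_0+\epsilon]$,
\begin{equation*}
(L^\epsilon)'(t)=\frac{L(t)-L(t-\epsilon)}{\epsilon}=\frac{\Delta L(s_0)}{\epsilon}+\frac{1}{\epsilon}\,\big(\text{oscillation over }[t-\epsilon,t]\text{ of }L\text{ with the jump at }s_0\text{ removed}\big),
\end{equation*}
which has a fixed sign once $\epsilon$ is small; hence $X^\epsilon$, which solves $\dot X^\epsilon=b(X^\epsilon)+f(X^\epsilon)(L^\epsilon)'$ on that interval, crosses the (one-dimensional) state space \emph{monotonically} from $X^\epsilon(s_0-)+o(1)$ to $\phi(\Delta L(s_0)f;X^\epsilon(s_0-),1)+o(1)=X(s_0)+o(1)$, the value prescribed by \eqref{m1}; in the completed-graph picture this monotone arc converges in Hausdorff distance to the segment joining $(s_0,X(s_0-))$ and $(s_0,X(s_0))$, and a monotone arc contributes nothing to $w_{M_1}$. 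To turn this into estimates I would use a random time change: let $\lambda$ be the continuous, strictly increasing process obtained from the identity by inserting, at each $s$ with $|\Delta L(s)|>\eta$, an interval on which the jump of $L$ is traversed at unit speed; then \eqref{m1} pulls back under $\lambda$ to an ordinary (Stratonovich) equation driven by the de-jumped process $\check L:=L\circ\lambda^{-1}$, with $X=\check X\circ\lambda$, while a nearby time change $\lambda_\epsilon\to\lambda$ turns $X^\epsilon$ into $\check X^\epsilon$ driven by a smoothing $\check L^\epsilon$ of $\check L$ with $\check L^\epsilon\to\check L$ locally uniformly. Since $L$ has only finitely many jumps of magnitude $>\eta$ on $[0,T]$, for $\delta$ small each $\delta$-window meets at most one such large ramp, whose contribution to $w_{M_1}(X^\epsilon,\delta)$ is $o(1)$ by the above; what then remains in $w_{M_1}(X^\epsilon,\delta)$ is the drift term (of order $\|b\|_\infty\delta$), the continuous-martingale term ($o_P(1)$ uniformly in $\epsilon$ by the classical continuous-driver Wong-Zakai estimates, cf.~\cite{Ikeda,Kurtz}, together with the $C^1$-control of the flows $\phi$ afforded by the boundedness and Lipschitz hypotheses on $f,f'$), and the contribution of the jumps of magnitude $\leq\eta$.

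\textbf{The main obstacle.} Controlling the small jumps uniformly in $\epsilon$ is the delicate point, exactly as in~\cite{Kurtz}. Write $L=\hat L^\eta+M^\eta$, where $M^\eta$ is the purely discontinuous local martingale of compensated jumps of magnitude $\leq\eta$; then $\EX\sum_{s\leq T}(\Delta M^\eta_s)^2\to 0$ as $\eta\to 0$, so $\EX\|M^\eta\|_{\infty,[0,T]}^2\to 0$, and therefore $\EX\|(M^\eta)^\epsilon\|_{\infty,[0,T]}^2\to 0$ uniformly in $\epsilon$. The difficulty is to deduce from this smallness of the driver perturbation $(M^\eta)^\epsilon$ the smallness, uniformly in $\epsilon$, of $X^\epsilon-X^{\epsilon,\eta}$, where $X^{\epsilon,\eta}$ solves the RDE with the truncated smoothed driver $(\hat L^\eta)^\epsilon$: a direct Gronwall estimate is useless because $(L^\epsilon)'$ is not integrable uniformly in $\epsilon$, and the remedy is the standard integration-by-parts device, which trades the offending term $\int_0^t f'(X^{\epsilon,\eta}_s)\,(M^\eta)^\epsilon_s\,dX^{\epsilon,\eta}_s$ for expressions each carrying the small factor $(M^\eta)^\epsilon$, followed by a Burkholder-Davis-Gundy bound on the continuous-martingale part and Gronwall's lemma---all carried out on the time-changed stage where the large jumps have been opened up; the same scheme produces the a priori sup-norm bound. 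Finally, the triangle inequality for $d_{M_1}$ combines the small-jump error, the $M_1$-convergence for the truncated driver (obtained from the continuous-driver Wong-Zakai theorem after transporting back through $\lambda$), and the small-jump error once more; letting $\epsilon\to 0$ and then $\eta\to 0$ delivers the oscillation estimate, hence $M_1$-tightness, hence the theorem.
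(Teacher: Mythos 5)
Your architecture---prove $M_1$-tightness of $\{X^\epsilon\}$ and then identify the limit through the pointwise convergence of \cite[Theorem 6.5]{Kurtz}---is genuinely different from the paper's. The paper never establishes tightness of $X^\epsilon$ directly: it time-changes globally, setting $Y^\epsilon(t)=X^\epsilon_{\varsigma^\epsilon(t)}$ where $\varsigma^\epsilon$ is the inverse of the clock $\gamma^\epsilon(t)=\frac{1}{\epsilon}\int_{t-\epsilon}^t([L]^d(s)+s)\,ds$, proves that $Y^\epsilon$ converges \emph{uniformly} in probability by feeding the time-changed equation into the Kurtz--Protter ``good sequence'' machinery of \cite{Kp91,Kurtz96}, and then recovers $M_1$-convergence of $X^\epsilon=Y^\epsilon_{\gamma^\epsilon(\cdot)}$ from the $M_1$-continuity of composition (\cite[Theorem 13.2.3]{W2002}). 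Opening up \emph{all} jumps at once with the clock $[L]^d(t)+t$ is what lets the paper dispense with your truncation in $\eta$ and with any uniform-in-$\epsilon$ oscillation estimate; your local time change near each large jump is the same idea applied only where it is easy.

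Two steps of your plan have genuine gaps. First, the identification step: \cite[Theorem 6.5]{Kurtz} gives $X^\epsilon(t)\to X(t)$ in probability only for $b\equiv 0$, and, as the paper's Remark 3.1 points out, extending Kurtz--Pardoux--Protter's argument to $b\neq 0$ requires the extra hypothesis that $b'$ be bounded and Lipschitz, which Theorem \ref{maintheorem} does not assume. So the finite-dimensional convergence you ``invoke'' is not available in the generality you need; supplying it is not a formality but essentially the content of the paper's Lemma \ref{L1}. Second, the small-jump estimate: after your integration by parts the dangerous term becomes, schematically, $\int_0^t f'f(X^{\epsilon,\eta}_s)\,(M^\eta)^\epsilon_s\,d(\hat L^\eta)^\epsilon_s$, and this cannot be bounded by $\|(M^\eta)^\epsilon\|_{\infty}$ times the total variation of $(\hat L^\eta)^\epsilon$, because that total variation is not bounded uniformly in $\epsilon$ (already for $\hat L^\eta$ a Brownian motion it is of order $\epsilon^{-1/2}$). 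Closing this estimate needs the full UT/goodness apparatus or a further integration by parts exploiting the martingale structure, not just Burkholder--Davis--Gundy and Gronwall; relatedly, $\EX\sum_{s\le T}(\Delta M^\eta_s)^2<\infty$ and $\EX\|M^\eta\|_{\infty,[0,T]}^2\to 0$ require a localization argument for a general semimartingale. Until these two points are filled in, the proposal is a plausible outline of an alternative route rather than a proof.
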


For a real-valued stochastic process $Y=\{Y(t); 0\leq t < \infty\}$ and a positive real number $a>0$, let $\tau_a(Y)$ denote the first passage time of $Y$ with respect to $a$, that is,
$$
\tau_a(Y) = \inf\{t\geq0: Y(t)>a\}.
$$
As an immediate corollary of Theorem~\ref{maintheorem}, we have
\begin{corollary}
For any positive real number $a > 0$, $\tau_a(X^\epsilon)$ converges
to $\tau_a(X)$ in distribution.
\end{corollary}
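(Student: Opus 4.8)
The plan is to deduce the corollary from Theorem~\ref{maintheorem} by the continuous mapping theorem; the only subtle point is that the first‑passage‑time functional $\tau_a\colon D([0,T],\R)\to[0,\infty]$ is not continuous everywhere under the $M_1$‑topology, so one must pin down its discontinuity set and check that it is null for the law of $X$. First I would record the relevant continuity statement: for $x\in D([0,T],\R)$ with $\tau_a(x)<T$, the map $\tau_a$ is continuous at $x$ provided $x$ \emph{crosses $a$ transversally}, i.e. $\sup_{0\le s\le \tau_a(x)-\delta}x(s)<a$ for every $\delta\in(0,\tau_a(x))$ --- equivalently, $x$ does not attain the level $a$ at any time strictly before $\tau_a(x)$; note that, because $\tau_a$ is defined with a strict inequality, the complementary requirement $\sup_{\tau_a(x)\le s\le \tau_a(x)+\delta}x(s)>a$ for all $\delta>0$ is automatic. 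This is a standard property of the $M_1$‑topology whose short proof I would recall from the parametric representation of $M_1$‑convergence described in Appendix~\ref{skorokhod}: if $x_n\to x$ in $M_1$ and $x$ is a transversal crossing, then on the one hand $\sup_{[0,\tau_a(x)-\delta]}x_n\to\sup_{[0,\tau_a(x)-\delta]}x<a$ (the supremum functional being $M_1$‑continuous), which keeps $\tau_a(x_n)\ge\tau_a(x)-\delta$, and on the other hand the completed graphs of the $x_n$ must pass near the point $(t_1,x(t_1))$ of the completed graph of $x$ for some $t_1<\tau_a(x)+\delta$ with $x(t_1)>a$, which forces $\tau_a(x_n)<\tau_a(x)+\delta$ for $n$ large.

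Next, by Theorem~\ref{maintheorem}, $X^{\epsilon}\to X$ in probability, hence in distribution, in $(D([0,T],\R),M_1)$ for every $T>0$; combining this with the previous step and the continuous mapping theorem, $\tau_a(X^{\epsilon})\wedge T\to\tau_a(X)\wedge T$ in distribution (in fact, after passing to a.s.‑convergent subsequences, in probability) as soon as
\[
\mathbb{P}\bigl(\tau_a(X)<T \text{ and } X \text{ is not a transversal crossing of } a\bigr)=0\qquad\text{for every }T>0 .
\]
Letting $T\uparrow\infty$ and using $\{\tau_a(X)<T\}\uparrow\{\tau_a(X)<\infty\}$ then yields convergence in distribution of $\tau_a(X^{\epsilon})$ to $\tau_a(X)$ in $[0,\infty]$, the atom at $\infty$ being recovered in the limit, which is even more than the corollary asks. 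Thus everything reduces to showing that, for every fixed $a>0$, $X$ is almost surely a transversal crossing of $a$; equivalently, $X$ almost surely does not attain a one‑sided local maximum equal to $a$ at any time strictly before $\tau_a(X)$.

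To prove this I would split according to how $X$ first reaches $a$. If $\tau_a(X)$ is a jump time of $L$, then $X$ either jumps strictly over $a$, a transversal crossing, or jumps exactly onto $a$; the latter is ruled out (almost surely, for fixed $a$) once one observes, via the Marcus representation~\eqref{m1}--\eqref{m2}, that a jump can carry $X$ into $\{\ge a\}$ only when $f(a)\ne 0$ (because the flow~\eqref{m2} fixes $a$ when $f(a)=0$) and that the landing point then avoids the single value $a$ with probability one. If $\tau_a(X)$ is a continuity time of $X$, then near it $X$ is driven by $dX=b(X)\,dt+f(X)\circ dL^{c}$; on $\{f(a)\ne 0\}$ the continuous local martingale part of $X$ is, after a Dambis--Dubins--Schwarz time change, a Brownian motion in a neighbourhood of the level $a$, and since a \emph{fixed} level is almost surely not a one‑sided local extremum of Brownian motion (cover the bad event by $\bigcup_{q<q'\text{ rational}}\{\max_{[q,q']}=a\}$, each summand having probability zero), $X$ strictly overshoots $a$ at the first time it touches it, so $\tau_a(X)$ is that time and the crossing is transversal. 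The residual ``degenerate'' case $f(a)=0$, where $X$ near $a$ is governed essentially by the drift $b$, I would treat by a direct analysis of the (deterministic, or nearly so) local dynamics of $X$ at $a$, isolating it as a separate lemma on the level‑crossing behaviour of the solution of the Marcus equation. I expect this last step --- almost sure transversality of the crossing of a \emph{prescribed} level $a$ by $X$ --- to be the genuine obstacle: the two earlier steps are soft and follow the by‑now‑standard passage from $M_1$‑convergence to first‑passage‑time convergence, whereas here one must exploit the fine structure of $X$, perhaps under a mild non‑degeneracy hypothesis on $f$.
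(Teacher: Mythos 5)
Your reduction to the continuous mapping theorem has the right shape, and your $M_1$-continuity criterion for $\tau_a$ at ``transversal crossings'' is essentially the standard one; the problem is the step you yourself flag as the genuine obstacle, which in the generality of this paper is not merely hard but false. The driver $L$ is an arbitrary stochastically continuous c\`adl\`ag semimartingale: it may be of finite variation, purely discontinuous, or even deterministic, so there is no Brownian-like continuous martingale part to time-change via Dambis--Dubins--Schwarz, and the deferred case $f(a)=0$ is not an isolated degeneracy. Concretely, take $b\equiv 0$, $f\equiv 1$, $X_0=0$ and $L$ deterministic and continuous with $L(0)=0$, $L(t)=a-a(t-1)^2$ on $[0,2]$ and $L(t)=a(t-2)$ thereafter. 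Then $X=L$ is a semimartingale satisfying all hypotheses, $\tau_a(X)=3$, and $X$ attains the level $a$ at $t=1<\tau_a(X)$ with probability one; the perturbations $x_n=X+1/n$ converge to $X$ uniformly, hence in $M_1$, yet $\tau_a(x_n)\to 1$. So $X$ lies in the discontinuity set of $\tau_a$ almost surely, and the continuous mapping theorem in the form you invoke is inapplicable. (The corollary is nevertheless true there because the specific approximants $X^\epsilon$ are running averages that stay strictly below $a$ near $t=1$; this shows that no argument based purely on the law of $X$ and continuity of $\tau_a$ can work without extra non-degeneracy hypotheses on $f$ and $[L]^c$ that the corollary does not assume.)

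The paper's own proof never asks $\tau_a$ to be continuous. It rewrites the distribution functions as
$\PX(\tau_a(X^\epsilon)\le t)=\PX\bigl(\sup_{s\in[0,t]}X^\epsilon(s)\ge a\bigr)$ and
$\PX(\tau_a(X)\le t)=\PX\bigl(\sup_{s\in[0,t]}X(s)\ge a\bigr)$, and then uses only that the running-supremum functional is continuous \emph{everywhere} on $D([0,t],\R)$ under the $M_1$-topology (\cite[Lemma 2.1]{Puhalskii}); Theorem~\ref{maintheorem} and the ordinary continuous mapping theorem then give convergence of the one-dimensional suprema and hence of the distribution functions at continuity points. What this buys is that the only thing to be excluded is the one-dimensional ``grazing'' event $\{\sup_{s\in[0,t]}X(s)=a\}$ entering the stated identity, rather than a pathwise transversality property of $X$; no fine structure of $X$ beyond Theorem~\ref{maintheorem} is used. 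To repair your argument you would either have to impose non-degeneracy assumptions absent from the statement, or switch to the supremum-functional route.
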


\begin{proof}
The corollary follows from Theorem \ref{maintheorem}, the easily verifiable fact that for any $t, \epsilon > 0$,
$$
\PX(\tau_a(X^\epsilon) \leq t)=\PX(\sup_{s\in [0,t]} X^\epsilon(s)
\geq a), \quad \PX(\tau_a(X) \leq t)=\PX(\sup_{s\in [0,t]} X(s)
\geq a),
$$
and the fact that the supremum functional as above is continuous under the Skorokhod
$M_1$-topology~\cite[Lemma 2.1]{Puhalskii}.
\end{proof}

The key tool that we used in this work is the so-called method of
random time change (see, e.g.,~\cite{Kurtz91}), which is a
well-known method that has also been used in~\cite{Kurtz}. On the
other hand though, the power of this method somehow has not been
fully utilized in~\cite{Kurtz}: Theorem~\ref{maintheorem} in this
work, which is established through a short and simple
argument, immediately implies that $X^\epsilon(t)$ converges in
probability to $X(t)$ for all but countably many $t$, which further
implies Theorem $6.5$ in \cite{Kurtz}. As a matter of fact, the
power of this method can be further showcased in some special
setting: For the case that $L$ is a L\'{e}vy process, the
method of Hintze and Pavlyukevich~\cite{Ilya} can be adapted to show
that as $\epsilon$ tends to $0$, $L^\epsilon$ converges in
probability to $L$ under the Skorokhod $M_1$-topology, whereas our
proof employing the method of random time change readily shows a
stronger result stating that as $\epsilon$ tends to $0$,
$L^\epsilon$ converges \emph{almost surely} to $L$ under the
Skorokhod $M_1$-topology (see Theorem~\ref{th2} in Section
\ref{idea}). Here we remark that the proof of Theorem $3.1$
in~\cite{Ilya} is heavily dependent on the structure of the L\'{e}vy
process and cannot carry over to the case when $L$ is a general
semimartingale, in which case our proof however aptly applies.

The remainder of this paper is organized as follows. In Section~\ref{idea}, we use a special case to illustrate the key methodology used in our proof. In Section~\ref{WZ}, we prove Theorem~\ref{maintheorem}, the main result of this paper. For self-containedness, we recall in Appendix~\ref{skorokhod} some basic notions and results on the Skorokhod $J_1$ and $M_1$-topologies.

\section{A Special Case}\label{idea}
%%%%%%%%%%%%%%%%%%%%%%%%%%%%%%%%%%%%%%%%%%%%%%%%%%%%%%%%%%%%%%%%%%%

Note that if we set $b\equiv 0$, $f \equiv 1$ and $X_0=0$, then
$X^{\epsilon}$ is nothing but $L^{\epsilon}$. In this section, for
illustrative purposes, we will use the method of random time change
to establish the following theorem:
\begin{theorem}\label{th2}
$L^\epsilon$ converges almost surely to to the semimartingale $L$ under the Skorokhod
$M_1$-topology.
\end{theorem}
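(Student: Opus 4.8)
The plan is to exploit the one-dimensional structure of the problem: since $L$ is a fixed càdlàg semimartingale (indeed, for this special case, we may as well work path by path), the moving-average approximation $L^\epsilon(t)=\frac{1}{\epsilon}\int_{t-\epsilon}^{t}L(s)\,ds$ is, for each fixed $\omega$, nothing but a deterministic smoothing of a fixed càdlàg function. Hence Theorem~\ref{th2} will follow once we prove the following purely deterministic statement: if $\ell:[0,\infty)\to\R$ is càdlàg, then $\ell^\epsilon(t):=\frac{1}{\epsilon}\int_{t-\epsilon}^{t}\ell(s)\,ds$ (with $\ell$ extended by $\ell(s)=\ell(0)$ for $s<0$, say) converges to $\ell$ in $D([0,T],\R)$ under the $M_1$-topology as $\epsilon\to0$, for every $T>0$. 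Applying this with $\ell=L(\cdot,\omega)$ for each $\omega$ then gives almost sure convergence.

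The core idea for the deterministic claim is the random time change / parametric-representation characterization of $M_1$-convergence recalled in Appendix~\ref{skorokhod}: $x_\epsilon\to x$ in $(D([0,T],\R),M_1)$ if and only if one can choose parametric representations $(u_\epsilon,r_\epsilon)$ of the completed graphs of $x_\epsilon$ and $(u,r)$ of $x$ such that $\|u_\epsilon-u\|_\infty\to0$ and $\|r_\epsilon-r\|_\infty\to0$. First I would treat the case of a pure jump function with a single jump, say $\ell=c_0\mathbf{1}_{[0,t_0)}+c_1\mathbf{1}_{[t_0,\infty)}$: here $\ell^\epsilon$ is continuous, equal to $\ell$ off the window $[t_0,t_0+\epsilon)$ and interpolating linearly (in fact affinely) between $c_0$ and $c_1$ across that window; one writes down an explicit reparametrization that travels along the graph of $\ell^\epsilon$ and converges uniformly, as $\epsilon\to0$, to a parametrization that pauses at $t_0$ to traverse the vertical segment $[c_0\wedge c_1,\,c_0\vee c_1]$ — which is exactly the completed graph of $\ell$. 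This makes the single-jump case transparent. Next, I would handle a function with finitely many jumps by the same device, noting the jump windows are eventually disjoint, and then pass to a general càdlàg $\ell$ by the standard approximation: given $\eta>0$, split $\ell=\ell_{\eta}+\rho_\eta$ where $\ell_\eta$ has finitely many jumps (the ``big'' jumps of size $\geq\eta$) plus a uniformly continuous part, and $\rho_\eta$ has oscillation at most $\eta$ on a suitable partition; since the operator $\ell\mapsto\ell^\epsilon$ is linear and a contraction in sup norm, $\|\rho_\eta^\epsilon-\rho_\eta\|_\infty\leq 2\|\rho_\eta\|_\infty$ stays small uniformly in $\epsilon$, and $M_1$ being weaker than the uniform metric, a triangle-inequality argument in the $M_1$ metric closes the estimate.

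The main obstacle is that the $M_1$ metric does not interact cleanly with addition — it is not a norm, and $d_{M_1}(x+z,y+z)$ is not controlled by $d_{M_1}(x,y)$ in general — so the decomposition step above has to be carried out with care. The clean way around this is to avoid decomposing inside the $M_1$ metric and instead do the decomposition in the uniform norm for the ``small'' part and handle the finitely-many-jumps ``big'' part directly with explicit parametric representations: concretely, show $d_{M_1}(\ell^\epsilon,\ell)\leq d_{M_1}(\ell^\epsilon,\ell_\eta^\epsilon)+d_{M_1}(\ell_\eta^\epsilon,\ell_\eta)+d_{M_1}(\ell_\eta,\ell)$ and bound the first and third terms by the sup norm $\|\rho_\eta^\epsilon\|_\infty\vee\|\rho_\eta\|_\infty\leq 2\|\rho_\eta\|_\infty\leq 2\eta$ using that $d_{M_1}\leq\|\cdot\|_\infty$, then send $\epsilon\to0$ to kill the middle term and finally send $\eta\to0$. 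The only genuinely computational part is verifying, for the finite-jump case, that the natural parametric representations of $\ell^\epsilon_\eta$ converge uniformly to one of $\ell_\eta$; this is elementary because away from the finitely many shrinking jump windows $\ell^\epsilon_\eta$ coincides with (or uniformly approximates) $\ell_\eta$, and on each window it is affine and monotone between the correct endpoints, which is precisely the configuration that $M_1$ tolerates.
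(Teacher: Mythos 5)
Your route is genuinely different from the paper's. The paper proves Theorem~\ref{th2} via the random time change built from the quadratic variation: it introduces the clock $\gamma^0(t)=[L]^d(t)+t$ and its smoothing $\gamma^\epsilon$, shows that the time-changed processes $Z^\epsilon=L^\epsilon\circ\varsigma^\epsilon$ converge \emph{uniformly} to a continuous limit $Z$, shows separately (using monotonicity of $[L]^d$) that $\gamma^\epsilon\to\gamma^0$ almost surely in $M_1$, and then composes. You instead observe that the statement is purely pathwise and reduce it to a deterministic claim about moving averages of c\`adl\`ag functions, which you attack by uniform approximation with finite-jump functions together with $d_{M_1}\le\|\cdot\|_\infty$ and explicit parametric representations for the finite-jump case. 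Your reduction is legitimate, and if completed it actually gives a formally stronger and more general conclusion (sure convergence for every c\`adl\`ag path, with no semimartingale structure used); the trade-off is that the paper's time-change machinery is not wasted, since it is exactly what is reused to prove Theorem~\ref{maintheorem}.

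There is, however, one concrete gap in your decomposition step. For a general c\`adl\`ag path, subtracting the finitely many jumps of size $\ge\eta$ together with a ``uniformly continuous part'' does \emph{not} leave a remainder $\rho_\eta$ with $\|\rho_\eta\|_\infty\le\eta$: the accumulated small jumps can contribute order one in sup norm (indeed a general c\`adl\`ag function need not decompose as continuous plus a convergent sum of jumps at all), and ``oscillation at most $\eta$ on each cell of a partition'' does not imply $\|\rho_\eta\|_\infty\le\eta$. What your triangle-inequality argument actually needs is a \emph{uniform} approximation of $\ell$ by a function with finitely many jumps, and this is available by the standard c\`adl\`ag partition lemma: choose $0=s_0<\dots<s_k=T$ with $\mathrm{osc}(\ell,[s_{i-1},s_i))<\eta$ and set $\ell_\eta=\sum_i\ell(s_{i-1})\mathbf{1}_{[s_{i-1},s_i)}$, a step function with $\|\ell-\ell_\eta\|_\infty<\eta$. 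With that substitution (and citing, or proving via parametric representations, the inequality $d_{M_1}\le\|\cdot\|_\infty$), your three-term estimate closes. Two smaller points you should also record: the extension of $\ell$ to $s<0$ must match the convention implicit in \eqref{apps1}, and if $T$ is a jump time of the path then $\ell^\epsilon(T)\to\ell(T-)\ne\ell(T)$, so $M_1$ convergence on $[0,T]$ requires $T$ to be a continuity point --- which holds almost surely for each fixed $T$ by the assumed stochastic continuity of $L$.
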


\begin{proof}
Let $[L]=[L,L]$ denote the quadratic variation of $L$, and let $[L]^c$ and $[L]^d$ denote its continuous and purely discontinuous parts, respectively. Define $ \gamma^0(t):=[L]^d(t)+t$, and for any $\epsilon > 0$, define
$$
\gamma^\epsilon(t):=\frac{1}{\epsilon}\int_{t-\epsilon}^t
([L]^d(s)+s)ds.
$$
It can be shown that for any $t \geq 0$ and any $\epsilon > 0$,
$\gamma^\epsilon(t)<\gamma^0(t)<\gamma^\epsilon(t+\epsilon)$. For any $\epsilon > 0$, let
$\varsigma^\epsilon$ be the generalized inverse of
$\gamma^\epsilon$, i.e., $\varsigma^\epsilon(t):=\inf\{s>0:
\gamma^\epsilon(s)>t\}$. It can also be shown that for any $t \geq 0$ and any $\epsilon >0$, $\varsigma^\epsilon(t)-\epsilon
<\varsigma^0(t) <\varsigma^\epsilon(t)$, which implies that
$\varsigma^\epsilon$ converges to $\varsigma^0$ uniformly over all
$t$ from any bounded interval.

For any $\epsilon >0$, define
$Z^\epsilon(t)=L^\epsilon_{\varsigma^\epsilon(t)}$; in other words, the new process $Z^\epsilon$ is the original process
$L^{\epsilon}$ reevaluated in the new time scale $\varsigma^\epsilon(\cdot)$. It can be easily verified that
$Z^\epsilon(t)$ is continuous in $t$.

The remainder of the proof consists of three steps as follows.

\textbf{Step 1:} In this step, we
will show that as $\epsilon$ tends to $0$, $\{Z^\epsilon(t)\}$
uniformly converges to a continuous process.

Defining
$$
\eta_-(t) = \sup\{s: \varsigma^0(s) <\varsigma^0(t) \}, \quad \eta^+(t) = \inf\{s: \varsigma^0(s) >\varsigma^0(t)\},
$$
letting $\{\tau_i, i\in \mathbb{N}\}$ denote the sequence of all the jump times of $L$, we will deal with the following two cases.

\textbf{Case 1:} $t\in [0,\gamma^0(\tau_1 -))$ or $t\in
(\gamma^0(\tau_i ),\gamma^0(\tau_{i+1} -))$ for some $i$.

In this case, we have $\eta_-(t)=\eta^+(t)$, and $L$ is continuous
at $\varsigma^0(t)$. Consequently,
\[
\lim_{\epsilon \rightarrow 0+}Z^\epsilon(t)=\lim_{\epsilon
\rightarrow
0+}L^\epsilon_{\varsigma^\epsilon(t)}=L_{\varsigma^0(t)}.
\]

\textbf{Case 2:} $t\in [\gamma^0(\tau_i -),\gamma^0(\tau_i)]$.

In this case, $\varsigma^0(t) \equiv \tau_i$ and  $\eta_-(t)\neq
\eta^+(t)$, and $L$ has a discontinuity at $\varsigma^0(t)$. It can
be shown that
\begin{equation}\label{z1}
\lim_{\epsilon \rightarrow 0+}Z^\epsilon(\gamma^0(\tau_i
-))=\lim_{\epsilon \rightarrow 0+}L^\epsilon_{\varsigma^\epsilon
\circ \gamma^0(\tau_i -) }=L_{\tau_i -}=L_{\varsigma^0(t)-},
\end{equation}
and moreover,
\begin{equation}\label{z2}
\lim_{\epsilon \rightarrow 0+}Z^\epsilon(\gamma^0(\tau_i
))=\lim_{\epsilon \rightarrow 0+}L^\epsilon_{\varsigma^\epsilon
\circ \gamma^0(\tau_i ) }=L_{\tau_i }=L_{\varsigma^0(t)}.
\end{equation}
And it follows from
\[
\frac{dZ^\epsilon(t)}{dt}=\frac{L_{\varsigma^\epsilon(t)}-L_{\varsigma^\epsilon(t)-\epsilon}}{[L]^d_{\varsigma^\epsilon(t)}-[L]^d_{\varsigma^\epsilon(t)-\epsilon}+\epsilon}
\]
that for any $t \in [\gamma^0(\tau_i -),\gamma^0(\tau_i)]$,
\begin{equation}\label{z3}
\lim_{\epsilon \rightarrow
0+}\frac{dZ^\epsilon(t)}{dt}=\frac{L_{\varsigma^0(t)}-L_{\varsigma^0(t)-}}{[L]^d_{\varsigma^0(t)}-[L]^d_{\varsigma^0(t)-}}
=\frac{L_{\varsigma^0(t)}-L_{\varsigma^0(t)-}}{\eta^+(t)-\eta_-(t)}.
\end{equation}
Consequently, it follows from \eqref{z1},\eqref{z2} and \eqref{z3}
that $\lim_{\epsilon \rightarrow 0+}Z^\epsilon(t) = Z(t)$ uniformly over all $t$ from any bounded interval, where $Z$ is continuous and admits
following expression
\begin{align*}
Z(t)=\begin{cases}
L_{\varsigma^0(t)},   &\text{if}\quad \eta_-(t)=\eta^+(t),\\
\frac{t-\eta_-(t)}{\eta^+(t)-\eta_-(t)}L_{\varsigma^0(t)}
+\frac{\eta^+(t)-t}{\eta^+(t)-\eta_-(t)} L_{\varsigma^0(t)-},
&\text{if}\quad \eta_-(t)\neq\eta^+(t).
\end{cases}
\end{align*}

\bigskip
\textbf{Step 2:} This step will lead to the conclusion that as $\epsilon$ tends to $0$, $\gamma^\epsilon$ converges almost surely to $\gamma^0$ under the Skorokhod $M_1$-topology. The proof of this step is postponed to next section (see Lemma~\ref{L2.1}).

\textbf{Step 3:} In this step, we will show that as $\epsilon$ tends to $0$, $L^\epsilon$ converges almost surely to $L$ under the Skorokhod $M_1$-topology, thereby completing the proof.

It follows from the facts that $\varsigma^0 \circ \gamma^0(t)=t$ and
$\gamma^0(t) \notin [\gamma^0(\tau_i-),\gamma^0(\tau_i))$ for any
$t, \tau_i$ that $Z_{\gamma^0(t)}\equiv L(t)$. Since $Z^\epsilon$
uniformly converges to $Z$, and $\gamma^\epsilon$ converges almost
surely to $\gamma^0$ under the Skorokhod $M_1$-topology, we conclude
that $L^\epsilon(\cdot)=Z^\epsilon_{\gamma^\epsilon(\cdot)}$
converges almost surely to $Z_{\gamma^0(\cdot)}=L(\cdot)$ under the
Skorokhod $M_1$-topology.

\end{proof}

\begin{remark}
Compared to Theorem~\ref{maintheorem}, Theorem~\ref{th2} deals with
a special setting yet yields a stronger result. On the other hand,
compared to Theorem $3.1$ in~\cite{Ilya}, as mentioned in
Section~\ref{S1}, Theorem~\ref{th2} treats a more general setting
and still produces a stronger result.
\end{remark}

%%%%%%%%%%%%%%%%%%%%%%%%%%%%%%%%%%%%%%%%%%%%%%%%%%%%%%%%%%%%%%%%%%%%

\section{Proof of Theorem \ref{maintheorem}}\label{WZ}
%%%%%%%%%%%%%%%%%%%%%%%%%%%%%%%%%%%%%%%%%%%%%%%%%%%%%%%%%%%%%%%%%%%
%
The proof of Theorem $1.1$ roughly follows the framework laid out in the proof of Theorem~\ref{th2} and uses many notations defined therein.

For any $\epsilon > 0$, recall that $Z^{\epsilon}$ is defined as in
the proof of Theorem~\ref{th2}, and define $Y^{\epsilon}$ as
$Y^\epsilon(t)=X^\epsilon_{\varsigma^\epsilon(t)}$ for any $t$. It
can be easily verified that $\{Z^\epsilon(t)\}$ and
$\{Y^\epsilon(t)\}$ are continuous, and moreover $\{Y^\epsilon(t)\}$
is the unique solution to the following equation:
\begin{equation}\label{change0}
Y^\epsilon(t)=X_0+ \int_0^t b(Y^\epsilon(s)) d\varsigma^\epsilon(s)+\int_0^tf(Y^\epsilon(s))dZ^\epsilon(s), \quad 0 \leq t < \infty.
\end{equation}

We will first prove the following lemma.
\begin{lemma} \label{L1}
$Y^\epsilon$ converges in probability to a process $Y$ under the compact uniform topology.  Moreover, $Y$ is continuous and
satisfies
\begin{align*}
Y(t)&=X_0+\sum_i \left( \phi\left(f\Delta L(\tau_i), Y_{\gamma^0(\tau_i-)}, \frac{t\wedge\gamma^0(\tau_i)-\gamma^0(\tau_i-)}{|\Delta
L(\tau_i)|^2}\right)-Y_{\gamma^0(\tau_i -)}-f(Y_{\gamma^0(\tau_i-)})\Delta
L(\tau_i) \right)\\
& \quad \times I_{[\gamma^0(\tau_i -),+\infty)}+\int_0^t b(Y(s))
d\varsigma^0(s)+\int_0^tf(Y(s))dL_{\varsigma^0(s)}+\frac{1}{2}\int_0^tff'(Y(s))d[L]^c_{\varsigma^0(s)}.
\end{align*}
\end{lemma}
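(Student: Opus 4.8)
The plan is to feed the pathwise uniform convergences $\varsigma^\epsilon\to\varsigma^0$ and $Z^\epsilon\to Z$, both established in the proof of Theorem~\ref{th2}, into the random differential equation \eqref{change0} and pass to the limit. Since $\varsigma^0$ is continuous and nondecreasing, hence of locally bounded variation, and $b$ is bounded and Lipschitz, the drift term $\int_0^{\cdot} b(Y^\epsilon(s))\,d\varsigma^\epsilon(s)$ is harmless: being a Riemann--Stieltjes integral against a uniformly convergent monotone integrator, it converges locally uniformly as soon as $Y^\epsilon$ does. All the difficulty is concentrated in $\int_0^{\cdot} f(Y^\epsilon(s))\,dZ^\epsilon(s)$. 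The crucial feature is that, although $Z^\epsilon$ has locally bounded variation for each $\epsilon>0$, its total variation on a compact interval need not stay bounded as $\epsilon\to0$, because $Z^\epsilon$ still contains a Wong-Zakai smoothing of the continuous martingale part of $L$, reparametrized by $\varsigma^\epsilon$; consequently the limiting integral cannot be a Riemann--Stieltjes integral and a genuine correction term must appear.

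I would therefore split a fixed compact interval $[0,T]$ into the jump intervals $I_i=[\gamma^0(\tau_i-),\gamma^0(\tau_i)]$, each of length $|\Delta L(\tau_i)|^2$ and with $\varsigma^0\equiv\tau_i$ on it, and their complement. On a complementary stretch, for small $\epsilon$ the time change $\varsigma^\epsilon$ is, up to an $O(\epsilon)$ boundary layer, a translation, and off that layer $Z^\epsilon$ coincides with the classical Wong-Zakai smoothing of $L$ restricted to the corresponding jump-free segment of $\varsigma^0$-time, which is a continuous semimartingale; there the classical Wong-Zakai theorem for continuous semimartingales (see, e.g., \cite{Kurtz} and the references therein) applies and produces the Stratonovich solution, i.e.\ along that segment $dY=b(Y)\,d\varsigma^0+f(Y)\,dL_{\varsigma^0}+\tfrac12 ff'(Y)\,d[L]^c_{\varsigma^0}$, the correction being precisely $\tfrac12\int ff'(Y)\,d[L]^c_{\varsigma^0}$ once one checks that the continuous martingale part of $s\mapsto L_{\varsigma^0(s)}$ has quadratic variation $[L]^c_{\varsigma^0(s)}$. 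On a jump interval $I_i$, using $dZ^\epsilon/dt\to\Delta L(\tau_i)/|\Delta L(\tau_i)|^2$ from \eqref{z3} together with $d\varsigma^\epsilon\to0$ (which follows from $\varsigma^\epsilon(t)-\epsilon<\varsigma^0(t)<\varsigma^\epsilon(t)$), the equation \eqref{change0} collapses in the limit to the ODE $\dot y=f(y)\,\Delta L(\tau_i)/|\Delta L(\tau_i)|^2$; after the substitution $u=(t-\gamma^0(\tau_i-))/|\Delta L(\tau_i)|^2$ this reads $dy/du=\Delta L(\tau_i)\,f(y)$, whose solution started at $Y_{\gamma^0(\tau_i-)}$ is exactly the flow $\phi(f\Delta L(\tau_i),Y_{\gamma^0(\tau_i-)},u)$ of \eqref{m2}. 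This yields the jump term $\phi(\cdots)-Y_{\gamma^0(\tau_i-)}-f(Y_{\gamma^0(\tau_i-)})\Delta L(\tau_i)$ once one subtracts the increment $f(Y_{\gamma^0(\tau_i-)})\Delta L(\tau_i)$ already accounted for in $\int f(Y)\,dL_{\varsigma^0}$, since $s\mapsto L_{\varsigma^0(s)}$ jumps by $\Delta L(\tau_i)$ at $s=\gamma^0(\tau_i-)$ and is flat on $(\gamma^0(\tau_i-),\gamma^0(\tau_i)]$.

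To globalize, I would first define the candidate $Y$ as the unique solution of the equation asserted in the statement; existence, pathwise uniqueness, and continuity of $Y$ follow from the boundedness and Lipschitz continuity of $b,f,f'$ together with $\sum_i|\Delta L(\tau_i)|^2<\infty$, the latter making the jump series absolutely convergent via $|\phi(f\Delta L(\tau_i),w,1)-w-f(w)\Delta L(\tau_i)|\le C|\Delta L(\tau_i)|^2$. I would then prove $\sup_{t\le T}|Y^\epsilon(t)-Y(t)|\to0$ in probability by a Gronwall argument fed by the uniform inputs $\|\varsigma^\epsilon-\varsigma^0\|_{\infty,[0,T]}\to0$, $\|Z^\epsilon-Z\|_{\infty,[0,T]}\to0$, and the classical Wong-Zakai estimates on the jump-free segments. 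The main obstacle I anticipate is the uniform handling of the small jumps: one truncates at jump size $\delta$, so that on $[0,T]$ only finitely many $I_i$ are ``large'' --- finitely many flows to be glued with finitely many continuous-semimartingale Wong-Zakai limits, which can be organized as an induction over these intervals together with Gronwall --- and then lets $\delta\to0$; for this one needs the $O(|\Delta L(\tau_i)|^2)$ flow estimate above together with a Burkholder--Davis--Gundy bound, uniform in $\epsilon$, on the integral of $f(Y^\epsilon)$ against the small-jump part of $Z^\epsilon$, and one must also verify that the $O(\epsilon)$ boundary layers around each $I_i$, where the smoothing window $[s-\epsilon,s]$ straddles a jump of $L$, become negligible in the limit --- which again rests on $f$ being bounded and $\sum_i|\Delta L(\tau_i)|^2<\infty$.
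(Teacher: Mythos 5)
Your route is genuinely different from the paper's. The paper does not localize at the jumps at all: it sets $U^\epsilon=Z^\epsilon-L_{\varsigma^\epsilon}$, integrates $\int f(Y^\epsilon)\,dU^\epsilon$ by parts, and then passes to the limit in the whole equation at once by invoking the Kurtz--Protter weak-convergence machinery, namely the ``goodness''/UT property of $\{\int_0^\cdot U^\epsilon\,dZ^\epsilon\}$ and $\{\varsigma^\epsilon\}$ from \cite[Lemma 6.3]{Kurtz} together with \cite[Theorem 5.4]{Kp91}; the jump intervals are only examined afterwards, to rewrite the limiting equation in the stated form. You instead glue explicit limits interval by interval: the flow ODE on each $[\gamma^0(\tau_i-),\gamma^0(\tau_i)]$ (which is correct, and is essentially how the paper identifies the $\phi$-terms via \eqref{z3}) and a Wong--Zakai theorem on the complementary stretches, assembled by Gronwall and an induction over the large jumps.

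The gap is in the globalization step, which you have located but not closed. After truncating at jump size $\delta$, the complementary segments are \emph{not} driven by a continuous semimartingale: they still carry every jump of size at most $\delta$, so ``the classical Wong--Zakai theorem for continuous semimartingales'' is not available there; what is actually needed is a comparison, uniform in $\epsilon$, between the solution driven by the full $Z^\epsilon$ and the one driven by $Z^\epsilon$ with the small jumps removed. Your proposed tool --- a Burkholder--Davis--Gundy bound on the integral of $f(Y^\epsilon)$ against the small-jump part of $Z^\epsilon$ --- does not apply as stated, because for each fixed $\epsilon>0$ that integrator is an adapted finite-variation process, not a local martingale, and its total variation blows up as $\epsilon\to0$. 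To make BDG bite one must first interchange the mollification and the stochastic integral (a stochastic Fubini step of the form $\int H\,dL^\epsilon\approx\int H^\epsilon\,dL$ with $H^\epsilon$ an averaged integrand) and only then estimate against the quadratic variation of the martingale part of $L$, with constants uniform in $\epsilon$; this interchange is precisely the content of the ``goodness'' lemma the paper leans on, and without it the double limit $\epsilon\to0$, $\delta\to0$ in your induction is not justified. The remainder of your outline --- the flow identification on jump intervals, the $O(|\Delta L(\tau_i)|^2)$ bound on the Marcus correction, the well-posedness and continuity of the limiting $Y$, the vanishing of the boundary layers --- is sound.
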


\begin{proof} The proof largely follows from that of Theorem $6.5$ in~\cite{Kurtz}, so we only give a sketch emphasizing the key steps.

As shown in Section~\ref{idea}, $\varsigma^\epsilon$ and $Z^\epsilon$ converge to $\varsigma^0$ and $Z$, respectively, both uniformly over any bounded interval, which immediately implies that $U^\epsilon \rightarrow U$ under the Skorokhod $J_1$-topology, where the processes $U^\epsilon$ and $U$ are defined as
$$
U^\epsilon(t) :=Z^\epsilon(t)-L_{\varsigma^\epsilon(t)}, \quad U(t) := Z(t)-L_{\varsigma^0(t)}.
$$
And we note that \eqref{change0} can be rewritten as
\begin{align}\label{change2}
Y^\epsilon(t) &=X_0+ \int_0^t b(Y^\epsilon(s))
d\varsigma^\epsilon(s)+\int_0^tf(Y^\epsilon(s))dL_{\varsigma^\epsilon(s)}+\int_0^tf(Y^\epsilon(s))dU^\epsilon(s) \nonumber\\
&=X_0+ \int_0^t b(Y^\epsilon(s))
d\varsigma^\epsilon(s)+\int_0^tf(Y^\epsilon(s))dL_{\varsigma^\epsilon(s)}
+f(Y^\epsilon(t))U^\epsilon(t)\nonumber\\
&\quad-\int_0^tf'(Y^\epsilon(s))U^\epsilon(s)dY^\epsilon(s)-[f(Y^\epsilon),U^\epsilon](t) \nonumber\\
&=X_0+ \int_0^t b(Y^\epsilon(s))
d\varsigma^\epsilon(s)+\int_0^tf(Y^\epsilon(s))dL_{\varsigma^\epsilon(s)}+f(Y^\epsilon(t))U^\epsilon(t) \nonumber\\
&\quad-\int_0^tf'(Y^\epsilon(s))f(Y^\epsilon(s))U^\epsilon(s)dZ^\epsilon(s)-\int_0^tf'(Y^\epsilon(s))
 b(Y^\epsilon(s))U^\epsilon(s) d{\varsigma^\epsilon}(s),
\end{align}
where we have used the fact that $[f(Y^\epsilon),U^\epsilon] \equiv 0$.

By~\cite[Lemma 6.3]{Kurtz}, we infer that
$\{\int_0^{\cdot}U^\epsilon(s)dZ^\epsilon(s)\}$ and
$\{{\varsigma^\epsilon}(\cdot)\}$ are ``good'' (see Kurtz-Protter~\cite{Kp91,Kurtz96}), and moreover
\begin{equation}\label{varsigma}
    \varsigma^\epsilon \rightarrow \varsigma^0
\end{equation}
uniformly, and in probability
\begin{equation}\label{UZ}
\int_0^tU^\epsilon(s)dZ^\epsilon(s) \rightarrow \frac{U(t)
^2-[L]_{\varsigma^0(t)}}{2}=\frac{(Z(t)-L_{\varsigma^0(t)}
)^2-[L]_{\varsigma^0(t)}}{2}
\end{equation}
under the Skorokhod $J_1$-topology. Then, parallel to the proof of Lemma $6.4$ in~\cite{Kurtz}, we deduce that
$f(Y^\epsilon(t))U^\epsilon(t)$ converges in distribution to $R(t)$
under the Skorokhod $J_1$-topology, where
\begin{equation}\label{R}
R(t)=\sum_i I_{[\gamma^0(\tau_i -),\gamma^0(\tau_i))}(t) f \left(
\phi\left(f\Delta L(\tau_i), Y_{\gamma^0(\tau_i -)},
\frac{t-\gamma^0(\tau_i-)}{|\Delta L(\tau_i)|^2}\right) \right)U(t);
\end{equation}
here, $\{\tau_i, i\in \mathbb{N}\}$, as in the proof of Theorem~\ref{th2}, is the sequence of all the jump times of $L$. Moreover, by the definition of
$U^\epsilon$ and $\varsigma^\epsilon$, we deduce that
\begin{equation}\label{f-vanish}
\int_0^tf'(Y^\epsilon(s)) b(Y^\epsilon(s))U^\epsilon(s)
d\varsigma^\epsilon(s) \rightarrow 0,
\end{equation}
as $\epsilon$ tends to $0$.

Now, combining \eqref{change2}-\eqref{f-vanish} as above, we
deduce from~\cite{Kurtz96} and \cite[Theorem 5.4]{Kp91} that
$Y^\epsilon$ converges in distribution to $Y$ under the Skorokhod
$J_1$-topology, where
$$
\hspace{-2mm} Y(t)=X_0+ \int_0^t b(Y(s))d\varsigma^0(s)+\int_0^tf(Y(s))dL_{\varsigma^0(s)}
+R(t)-\frac{1}{2}\int_0^tf'(Y(s))f(Y(s))d(U(s)^2-[L]_{\varsigma^0(s)}).
$$
Note that $U(t)$ can be further computed as
\begin{align*}
U(t) =Z(t)-L_{\varsigma^0(t)}=
\begin{cases}
0, &\text{if}\quad \eta_-(t)=\eta^+(t),\\
\frac{\eta^+(t)-t}{\eta^+(t)-\eta_-(t)}(L_{\varsigma^0(t)-}-L_{\varsigma^0(t)}),
&\text{if}\quad \eta_-(t)\neq\eta^+(t).
\end{cases}
\end{align*}
It then follows from the fact for any $t \in[\gamma^0(\tau_i-),
\gamma^0(\tau_i))$,$$ \varsigma^0(t)\equiv \tau_i, \quad
\eta^+(t)-\eta_-(t)= \Delta [L]^d_{\tau_i} =|\Delta L(\tau_i)|^2
$$
that
\begin{align*}
    Y(t)&=Y_{\gamma^0(\tau_i-)}+ \int_{\gamma^0(\tau_i-)}^t
b(Y(s))d\varsigma^0(s)+\int_{\gamma^0(\tau_i-)}^t f(Y(s))dL_{\varsigma^0(s)}\\
& \quad +f \left( \phi\left(f\Delta L(\tau_i), Y_{\gamma^0(\tau_i -)},
\frac{t-\gamma^0(\tau_i-)}{|\Delta L(\tau_i)|^2}\right)
\right)U(t)-\int_{\gamma^0(\tau_i-)}^t f'(Y(s))f(Y(s))
\frac{s-\gamma^0(\tau_i)}{\eta^+(s)-\eta_-(s)}ds\\
&=Y_{\gamma^0(\tau_i-)}+f(Y_{\gamma^0(\tau_i-)})\Delta L(\tau_i)+f
\left(\phi\left(f\Delta L(\tau_i), Y_{\gamma^0(\tau_i -)},
\frac{t-\gamma^0(\tau_i-)}{|\Delta L(\tau_i)|^2}\right)
\right)U(t)\\
& \quad -\int_{\gamma^0(\tau_i-)}^t f'(Y(s))f(Y(s))
\frac{s-\gamma^0(\tau_i)}{\eta^+(s)-\eta_-(s)}ds.
\end{align*}
Consequently,
\begin{align}\label{eqy}
Y(t)&=X_0+\sum_i \left( \phi\left(f\Delta L(\tau_i), Y_{\gamma^0(\tau_i
-)}, \frac{t\wedge\gamma^0(\tau_i)-\gamma^0(\tau_i-)}{|\Delta
L(\tau_i)|^2}\right)-Y_{\gamma^0(\tau_i -)}-f(Y_{\gamma^0(\tau_i
-)})\Delta
L(\tau_i) \right)\nonumber\\
&\hspace{-1cm}\quad \times I_{[\gamma^0(\tau_i
-),+\infty)}(t)+\int_0^t b(Y(s))
d\varsigma^0(s)+\int_0^tf(Y(s))dL_{\varsigma^0(s)}+\frac{1}{2}\int_0^tff'(Y(s))d[L]^c_{\varsigma^0(s)}.
\end{align}
Since $Y^\epsilon$, $Y$ are continuous, we infer that $Y^\epsilon$ converges in distribution to $Y$ under the compact uniform topology. Finally, using a similar argument in~\cite[Theorem 6.5]{Kurtz}, we conclude that $Y^\epsilon$ converges in probability to $Y$ under the compact uniform topology, and thereby completing the proof.
\end{proof}

\begin{remark}
With the added assumption that $b'$ is bounded and Lipschitz, the
proof of Theorem $6.5$ in~\cite{Kurtz} can be slightly modified to
prove that for almost all $t$, $X^\epsilon(t)$ converges in
probability to $X(t)$. By comparison, Lemma~\ref{L1} reaches the
same conclusion without the added assumption as above.
\end{remark}

The following lemma characterizes the convergence behavior of
$\gamma^\epsilon$.
\begin{lemma}\label{L2.1}
As $\epsilon$ tends to $0$, $\gamma^\epsilon$ converges almost surely to $\gamma^0$ under the Skorokhod
$M_1$-topology.
\end{lemma}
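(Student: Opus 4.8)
The plan is to reduce the $M_1$-convergence of $\gamma^\epsilon$ to $\gamma^0$ to two elementary facts already noted in Section~\ref{idea}: the pointwise sandwich $\gamma^\epsilon(t) < \gamma^0(t) < \gamma^\epsilon(t+\epsilon)$, and the fact that $\gamma^0(t) = [L]^d(t) + t$ is a strictly increasing c\`adl\`ag function whose only discontinuities are the jumps of $[L]^d$, i.e.\ the squared jumps $|\Delta L(\tau_i)|^2$ at the jump times $\tau_i$ of $L$. Since $[L]^d$ has at most countably many jumps and $\sum_i |\Delta L(\tau_i)|^2 < \infty$ on every bounded interval (a.s.), $\gamma^0$ is, on each compact $[0,T]$, a fixed (sample-path-wise) monotone c\`adl\`ag function; $M_1$-convergence to such a limit can be checked via completed graphs. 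First I would fix $\omega$ outside the null set where $L$ fails to be c\`adl\`ag or $[L]$ fails to have finite variation on compacts, and work with $T$ fixed.

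The core step is to exhibit, for each small $\epsilon$, a parametric representation of the completed graph $\Gamma_{\gamma^\epsilon}$ that converges uniformly to a parametric representation of $\Gamma_{\gamma^0}$. Because both $\gamma^\epsilon$ and $\gamma^0$ are nondecreasing, their completed graphs are monotone arcs in $\R^2$, and the natural parametrization is by normalized arc length (or by $(t + \gamma^\epsilon(t))$ suitably rescaled). The key observation is that $\gamma^\epsilon$ is continuous and increasing, so its graph $\Gamma_{\gamma^\epsilon}$ is already a genuine curve, while $\Gamma_{\gamma^0}$ is obtained by inserting, at each jump time $\tau_i$, the vertical segment from $(\tau_i, \gamma^0(\tau_i-))$ to $(\tau_i, \gamma^0(\tau_i))$. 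From the sandwich inequality $\gamma^\epsilon(t) < \gamma^0(t) < \gamma^\epsilon(t+\epsilon)$ one gets that $\gamma^\epsilon \to \gamma^0$ at every continuity point of $\gamma^0$, and that near a jump time $\tau_i$ the graph of $\gamma^\epsilon$ rises from approximately $\gamma^0(\tau_i-)$ to approximately $\gamma^0(\tau_i)$ over a $t$-interval of length $O(\epsilon)$ — hence collapses, in the $M_1$ sense, onto the inserted vertical segment. I would make this quantitative by showing $\sup_t |\varsigma^\epsilon(t) - \varsigma^0(t)| \le \epsilon$ (already established in Section~\ref{idea}) and translating it back through the inverse relationship to a Hausdorff-distance bound between $\Gamma_{\gamma^\epsilon}$ and $\Gamma_{\gamma^0}$ of order $\epsilon$; combined with monotonicity of both graphs, a Hausdorff-distance bound on monotone arcs upgrades to the existence of uniformly close parametric representations, which is exactly $d_{M_1}(\gamma^\epsilon,\gamma^0) \to 0$.

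Concretely the steps are: (i) reduce to a fixed sample path and fixed $T$; (ii) record that $\gamma^0$ is strictly increasing c\`adl\`ag with jumps exactly at $\{\tau_i\}$, so $\Gamma_{\gamma^0}$ is a monotone arc; (iii) from $\gamma^\epsilon(t) < \gamma^0(t) < \gamma^\epsilon(t+\epsilon)$ and the continuity of $\gamma^\epsilon$, derive that the Hausdorff distance between the graphs $\Gamma_{\gamma^\epsilon}$ and $\Gamma_{\gamma^0}$ tends to $0$ (using that the horizontal ``width'' of the discrepancy is at most $\epsilon$ and the vertical one is controlled by the modulus of $\gamma^0$, which has no oscillation issues because it is monotone); (iv) invoke the standard fact that for nondecreasing functions, Hausdorff convergence of completed graphs is equivalent to $M_1$-convergence (see Appendix~\ref{skorokhod} / Whitt), concluding $d_{M_1}(\gamma^\epsilon, \gamma^0) \to 0$ a.s. The main obstacle I anticipate is step (iii): one must handle the accumulation of infinitely many small jumps of $[L]^d$ near a cluster point carefully, ensuring that the $O(\epsilon)$ horizontal smearing does not interact badly across nearby jumps; this is resolved by noting that for any $\delta > 0$ only finitely many jumps of $\gamma^0$ on $[0,T]$ exceed $\delta$, handling those finitely many via the explicit sandwich, and absorbing the rest into a uniform $\delta$-error that vanishes as $\delta \to 0$ — monotonicity of $\gamma^0$ is what makes this tail estimate clean.
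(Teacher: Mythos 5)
Your argument is correct, but it takes a genuinely different route from the paper's. The paper reduces the claim to showing $V^\epsilon(t):=\frac{1}{\epsilon}\int_{t-\epsilon}^t[L]^d(s)\,ds\to[L]^d(t)$, first establishes convergence \emph{in probability} under $M_1$ via Theorem~\ref{p3} (pointwise convergence in probability plus $w'(V^\epsilon,\delta)=0$ by monotonicity of $V^\epsilon$), and then upgrades to almost sure convergence by noting that $V^\epsilon(t)$ is pointwise nonincreasing in $\epsilon$ with $V^\epsilon\le[L]^d$, asserting that $d_{M_1,T}(V^\epsilon,[L]^d)$ is consequently monotone in $\epsilon$, and invoking the fact that monotone convergence in probability implies almost sure convergence~\cite[Lemma 2.5.4]{Ash}. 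You instead argue entirely pathwise: the sandwich $\gamma^\epsilon(t)<\gamma^0(t)<\gamma^\epsilon(t+\epsilon)$ gives a Hausdorff-distance bound of order $\epsilon$ between the completed graphs (every point of $\Gamma_{\gamma^\epsilon}$ lies within horizontal distance $\epsilon$ of $\Gamma_{\gamma^0}$ and conversely, since both are connected monotone arcs covering the same vertical range up to an $\epsilon$-shift of the time variable), and because all functions involved are nondecreasing the $M_1$ oscillation vanishes, so Hausdorff ($M_2$) convergence upgrades to $M_1$ convergence --- a standard fact from Whitt~\cite{W2002}, though not among the results recalled in Appendix~\ref{skorokhod}. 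Your route is deterministic on each sample path, so it delivers the almost-sure statement directly, with no in-probability-to-a.s.\ upgrade; in particular it bypasses the paper's somewhat delicate (and only briefly justified) step that the pointwise ordering $V^{\epsilon_2}\ge V^{\epsilon_1}$ forces $d_{M_1,T}(V^{\epsilon_2},[L]^d)\le d_{M_1,T}(V^{\epsilon_1},[L]^d)$. What the paper's route buys in exchange is that it stays entirely within the toolkit set up in its appendix. One simplification to your write-up: the large-jump/small-jump splitting in your step (iii) is unnecessary, since the Hausdorff bound coming from the sandwich is uniformly $O(\epsilon)$ irrespective of how the jumps of $[L]^d$ accumulate --- monotonicity alone already controls the vertical discrepancy.
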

\begin{proof}
We first prove that $\gamma^\epsilon$ converges in probability to
$\gamma^0$ under the Skorokhod $M_1$-topology. It suffices to verify
the following convergence in probability
\[
V^\epsilon(t):= \frac{1}{\epsilon}\int_{t-\epsilon}^t [L]^d(s)ds
\rightarrow [L]^d(t)
\]
under the Skorokhod $M_1$-topology. To this end, by~\cite[Theorem
22, Page 66]{Protter}, the quadratic variation process $[L]^d$ of
the semimartingale $L^d$ is a c\`{a}dl\`{a}g, increasing and adapted
process. It then follows that the mapping $t\mapsto
\frac{1}{\epsilon}\int_{t-\epsilon}^t [L]^d(s)ds$ is monotone.
Obviously, by the definition of $w'$ (see \eqref{w'})$,
w'(V^\epsilon,\delta)=0$, which implies that for any fixed
$\Delta>0$, $\lim_{\delta \rightarrow 0+} \limsup_\epsilon
\PX(w'(V^\epsilon,\delta)> \Delta )=0$. Then, one verifies that for
any $t$, $V^\epsilon(t)$ converges in probability to $[L]^d(t)$.
So, by Proposition~\ref{p3}, we have shown that $V^\epsilon$ converges in probability $[L]^d$ under the Skorokhod $M_1$-topology.

Now we turn to prove that $\gamma^\epsilon$ converges almost surely to $\gamma^0$
under the Skorokhod $M_1$-topology. Using the fact that $[L]^d(t)$ is monotone in $t$ and the definition of $V^\epsilon$, we have that for any $0=\epsilon_\infty <\epsilon_2 <\epsilon_1$,
$$
[L]^d(t)=V^0(t)=V^{\epsilon_\infty}(t)\geq V^{\epsilon_2}(t) \geq V^{\epsilon_1}(t).
$$
It then follows from the definition of $d_{M_1, T}$ that
\[
d_{M_1, T}(V^{\epsilon_2},[L]^d) \leq d_{M_1, T}(V^{\epsilon_1},[L]^d),
\]
that is to say, for any fixed time $T$, almost all $\omega$ in
$\Omega$, $d_{M_1,T}(V^{\epsilon},[L]^d) $ is monotonically
increasing in $\epsilon$. Now, applying the proven fact $V^\epsilon$
converges in probability to the semimartingale $[L]^d$ under the
Skorokhod $M_1$-topology and~\cite[Lemma 2.5.4]{Ash}, we conclude
that $V^\epsilon$ converges almost surely to the semimartingale
$[L]^d$ under the Skorokhod $M_1$-topology. Consequently, $\gamma^\epsilon$ converges almost surely to $\gamma^0$ under the
Skorokhod $M_1$-topology, completing the proof.
\end{proof}

Henceforth, letting $\hat{X}(t)=Y_{\gamma^0(t)}$, we prove the following two lemmas.

\begin{lemma}\label{L3}
$X^\epsilon$ converges in probability to $\hat{X}$ under the Skorokhod $M_1$-topology.
\end{lemma}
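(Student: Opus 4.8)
The plan is to combine the two convergence facts already established---namely that $Y^{\epsilon}\to Y$ in probability under the compact uniform topology (Lemma~\ref{L1}) and that $\gamma^{\epsilon}\to\gamma^0$ almost surely under the Skorokhod $M_1$-topology (Lemma~\ref{L2.1})---via the random time change identity $X^{\epsilon}(\cdot)=Y^{\epsilon}_{\gamma^{\epsilon}(\cdot)}$, which holds because $\varsigma^{\epsilon}\circ\gamma^{\epsilon}(t)=t$ (at least off the flat stretches, which are exactly the jump intervals of $L$, and there the composition is handled by continuity of $Y^{\epsilon}$). Correspondingly $\hat X(\cdot)=Y_{\gamma^0(\cdot)}$ by definition. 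So the statement reduces to a \emph{continuity of composition} assertion for the $M_1$-topology: if $f_n\to f$ uniformly on compacts with all $f_n$ and $f$ continuous, and $g_n\to g$ in $M_1$ with $g_n$, $g$ nondecreasing, then $f_n\circ g_n\to f\circ g$ in $M_1$.

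First I would set up the composition lemma carefully. The $M_1$-distance is defined through parametric representations: $g_n\to g$ in $(D[0,T],M_1)$ means there are parametrizations $(u_n,r_n)$ of the (completed) graph of $g_n$ and $(u,r)$ of that of $g$ with $\|u_n-u\|_\infty\vee\|r_n-r\|_\infty\to 0$. Given such representations for $g_n\to g$, I would produce parametric representations for $f_n\circ g_n$ by composing: a natural candidate for the spatial parametrization is $t\mapsto f_n(u_n(t))$ together with the same time parametrization $r_n$. The point is that the completed graph of $f_n\circ g_n$ is covered by $\{(f_n(u_n(s)),r_n(s))\}$ because the only new ``filled-in'' vertical segments of $f_n\circ g_n$ arise precisely where $g_n$ has a jump, and there $u_n$ traverses the jump interval while $r_n$ is constant, so $f_n\circ u_n$ sweeps out $f_n$ over that interval---which, since $f_n$ is continuous, is exactly the vertical-in-the-$M_1$-sense completion. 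Then $\|f_n(u_n(\cdot))-f(u(\cdot))\|_\infty\le \|f_n(u_n(\cdot))-f(u_n(\cdot))\|_\infty+\|f(u_n(\cdot))-f(u(\cdot))\|_\infty$; the first term is at most $\sup_n\|f_n-f\|_{\infty,[-M,M]}\to0$ on the relevant range, and the second tends to $0$ by uniform continuity of $f$ on compacts together with $\|u_n-u\|_\infty\to0$. Combined with $\|r_n-r\|_\infty\to0$, this gives $d_{M_1,T}(f_n\circ g_n, f\circ g)\to 0$. I should double-check the orientation/parametrization conventions in Appendix~\ref{skorokhod} so that the monotone ordering of the graph is respected, but this is routine.

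Next I would assemble the probabilistic statement. Fix $T>0$; since $\gamma^0$ is a.s.\ finite and nondecreasing, $\gamma^0(T)<\infty$ a.s., so composition over $[0,T]$ only ever evaluates $Y^{\epsilon}$, $Y$ on the random compact interval $[0,\gamma^0(T)+1]$ (eventually), and on that interval the uniform convergence $Y^{\epsilon}\to Y$ in probability applies. Because $Y$ is continuous and $Y^{\epsilon}$ are continuous, the composition lemma upgrades ``$\gamma^{\epsilon}\to\gamma^0$ a.s.\ in $M_1$ and $Y^{\epsilon}\to Y$ uniformly in probability'' to ``$Y^{\epsilon}_{\gamma^{\epsilon}(\cdot)}\to Y_{\gamma^0(\cdot)}$ in probability in $M_1$''---formally, along any subsequence one extracts a further subsequence on which $Y^{\epsilon}\to Y$ uniformly a.s., applies the deterministic composition lemma pathwise, and concludes convergence in probability by the usual subsequence criterion. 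This yields $X^{\epsilon}=Y^{\epsilon}_{\gamma^{\epsilon}}\to Y_{\gamma^0}=\hat X$ in probability under $M_1$ on $[0,T]$, hence on all of $[0,\infty)$ since $T$ was arbitrary.

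The main obstacle is the composition lemma itself: the $M_1$-topology is \emph{not} in general preserved under composition, and one really has to use (i) the nondecreasing-ness of $\gamma^{\epsilon},\gamma^0$ and (ii) the continuity of $Y^{\epsilon},Y$ to make it work. The delicate point is verifying that the parametric representation $(f_n\circ u_n, r_n)$ I construct is genuinely a legal parametrization of the completed graph of $f_n\circ g_n$---i.e.\ that it is monotone with respect to the graph order and that it fills exactly the right segments at the jumps of $g_n$---and that no \emph{spurious} oscillation is introduced at times where $g_n$ jumps but $g$ does not (here the uniform smallness of $\|u_n-u\|_\infty$ plus continuity of $f$ keeps the sweep $f_n$-over-$[g_n(t-),g_n(t)]$ small). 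A secondary, minor technical point is the join with the a.s.\ (not just in-probability) convergence of $\gamma^{\epsilon}$: since that convergence is a.s., one only needs the subsequence argument on the $Y^{\epsilon}$ factor, which keeps the bookkeeping light.
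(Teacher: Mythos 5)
Your overall strategy---writing $X^\epsilon = Y^\epsilon\circ\gamma^\epsilon$ and $\hat X = Y\circ\gamma^0$ and feeding Lemmas~\ref{L1} and~\ref{L2.1} into a composition theorem for the $M_1$-topology---is exactly the paper's, which at this point simply cites \cite[Theorem 13.2.3]{W2002}. However, the composition lemma you formulate is false as stated, and this is a genuine gap rather than a routine verification. If $f_n\to f$ locally uniformly with $f_n,f$ continuous and $g_n\to g$ in $M_1$ with $g_n,g$ nondecreasing, it does \emph{not} follow that $f_n\circ g_n\to f\circ g$ in $M_1$: take $f_n=f=\sin$, $g=10\pi\, I_{[1/2,1]}$ and $g_n$ the continuous ramp approximations; then $f\circ g\equiv 0$ while $f_n\circ g_n$ oscillates with amplitude $1$ near $t=1/2$, so there is no $M_1$ convergence (convergence in $M_1$ to a continuous limit forces locally uniform convergence). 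The failure occurs precisely at the step you yourself flag as ``delicate'': $(r, f\circ u)$ is a legal parametric representation of the completed graph of $f\circ g$ only if, for every jump time $t$ of the inner function $g$, the outer function $f$ is \emph{monotone} on $[g(t-),g(t)]$, because the completed graph fills the jump with the line segment $[[f(g(t-)),f(g(t))]]$ whereas $f\circ u$ traverses the actual path of $f$ over $[g(t-),g(t)]$. Without monotonicity that traversal leaves the segment and both the construction and the conclusion collapse. This monotonicity requirement is exactly the extra hypothesis in Whitt's composition theorem, and continuity of $f$ alone does not supply it.

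In the present application the missing hypothesis does hold, but it must be identified and verified: the inner function $\gamma^0$ jumps at each $\tau_i$, and by the formula for $Y$ established in Lemma~\ref{L1}, on the interval $[\gamma^0(\tau_i-),\gamma^0(\tau_i)]$ the process $Y$ follows the Marcus flow $s\mapsto \phi\bigl(f\Delta L(\tau_i), Y_{\gamma^0(\tau_i-)}, \cdot\bigr)$, i.e., the solution of a one-dimensional autonomous ODE, which is necessarily monotone (this is where the scalar setting enters in an essential way; the analogous statement would fail for vector-valued $X$). Once you state this monotonicity explicitly and use it to check that the composed parametrization really does parametrize the completed graph of $\hat X$, the rest of your argument---the triangle inequality on parametrizations, the restriction to the random compact interval $[0,\gamma^0(T)+1]$, and the subsequence argument combining almost sure and in-probability convergence---goes through and recovers the paper's one-line appeal to \cite[Theorem 13.2.3]{W2002}.
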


\begin{proof}
The lemma immediately follows from Lemma~\ref{L1}, Lemma~\ref{L2.1}
and~\cite[Theorem 13.2.3]{W2002}.
\end{proof}

\begin{lemma}\label{L4}
$\hat{X}$ is the unique solution to the equation \eqref{mainsde}, and therefore $\hat{X}\equiv X$.
\end{lemma}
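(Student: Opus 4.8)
The plan is to show that $\hat{X}(t) = Y_{\gamma^0(t)}$ satisfies the Marcus canonical equation \eqref{mainsde} (equivalently \eqref{m1}) by performing a time-change computation on the representation of $Y$ given in Lemma~\ref{L1}, and then to invoke uniqueness for \eqref{mainsde}. The key structural facts I would use are: $\varsigma^0 \circ \gamma^0(t) = t$, so that $Y_{\gamma^0(s)}$ evaluated against $dL_{\varsigma^0(\gamma^0(s))}$ recovers $dL(s)$; that $\gamma^0(t) = [L]^d(t) + t$ grows continuously except for jumps of size $|\Delta L(\tau_i)|^2$ at the jump times $\tau_i$ of $L$; and that on each ``stretched'' interval $[\gamma^0(\tau_i-), \gamma^0(\tau_i))$ the process $Y$ solves, by \eqref{change0} with $Z$ flat in its continuous part there, the ODE driving the Marcus flow $\phi$.

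First I would substitute $t \mapsto \gamma^0(t)$ into the formula for $Y$ in Lemma~\ref{L1} and analyze each term. The drift term $\int_0^{\gamma^0(t)} b(Y(s))\, d\varsigma^0(s)$ becomes $\int_0^t b(Y_{\gamma^0(s)})\, ds = \int_0^t b(\hat X(s))\, ds$ by the change-of-variables $s = \gamma^0(u)$ together with $\varsigma^0 \circ \gamma^0 = \mathrm{id}$ and the fact that $\varsigma^0$ is constant on the jump intervals of $\gamma^0$ (so those intervals contribute nothing to $d\varsigma^0$). Similarly, $\int_0^{\gamma^0(t)} f(Y(s))\, dL_{\varsigma^0(s)}$ becomes $\int_0^t f(Y_{\gamma^0(s-)})\, dL^d(s)$ plus the continuous-part contribution; pairing the latter with the term $\tfrac12 \int ff'(Y(s))\, d[L]^c_{\varsigma^0(s)}$ and changing variables converts it into $\int_0^t f(\hat X(s)) \circ dL^c(s)$, i.e.\ the Stratonovich integral, via the usual It\^o--Stratonovich correction $\tfrac12 ff'\, d[L]^c$. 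Finally, the sum over $i$ of the $\phi$-terms: at $t = \gamma^0(\tau_i)$ we have $\tfrac{t \wedge \gamma^0(\tau_i) - \gamma^0(\tau_i-)}{|\Delta L(\tau_i)|^2} = 1$, and since $\phi(f\Delta L(\tau_i); \cdot, s) = \phi(\sigma; \cdot, s)$ with $\sigma = f\,\Delta L(\tau_i)$, this produces exactly $\phi(\Delta L(\tau_i) f; \hat X(\tau_i-), 1) - \hat X(\tau_i-) - f(\hat X(\tau_i-))\Delta L(\tau_i)$, which is the Marcus jump correction appearing in \eqref{m1}; one must check that $\hat X(\tau_i-) = Y_{\gamma^0(\tau_i-)}$, which follows because $\gamma^0$ is left-continuous-limit compatible at $\tau_i$ and $Y$ is continuous.

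Having matched all terms, $\hat X$ solves \eqref{m1}, hence \eqref{mainsde}. I would then invoke the standard existence-and-uniqueness theorem for Marcus canonical SDEs driven by a semimartingale under the stated hypotheses (boundedness and Lipschitz continuity of $b$, $f$, $f'$ give the usual global Lipschitz/linear-growth bounds for the coefficients and for the flow $\phi$); see the references \cite{Applebaum,Kunita 99,Kunita 96,Kurtz} cited earlier. By uniqueness, $\hat X \equiv X$. The main obstacle will be the jump-interval bookkeeping in the change of variables: one has to verify carefully that the ``artificial'' time intervals $[\gamma^0(\tau_i-), \gamma^0(\tau_i))$ inserted by the time change contribute precisely the Marcus flow correction and nothing spurious to the $ds$- and $dL^c$-integrals — i.e.\ that $\varsigma^0$ is genuinely constant there and that the measure $d[L]^c_{\varsigma^0(\cdot)}$ assigns them zero mass — and that the continuous-part term reassembles exactly into the Stratonovich integral after the substitution. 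These are essentially the same manipulations used implicitly in \cite[Theorem 6.5]{Kurtz}, so I would cite that computation where convenient rather than redo it in full.
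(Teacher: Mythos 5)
Your proposal is correct and follows essentially the same route as the paper: substitute $t\mapsto\gamma^0(t)$ into the representation of $Y$ from Lemma~\ref{L1}, use $\varsigma^0\circ\gamma^0=\mathrm{id}$ and the constancy of $\varsigma^0$ on the inserted intervals to convert each term, identify the result with the expanded Marcus form \eqref{m1}, and conclude by uniqueness. The only cosmetic difference is that you explicitly reassemble the continuous part into a Stratonovich integral, whereas the paper keeps the It\^o integral plus the $\tfrac12 ff'\,d[L]^c$ correction and matches it against \eqref{m1} directly.
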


\begin{proof}
Since $\hat{X}(t)=Y_{\gamma^0(t)}$, by the equation \eqref{eqy}, we have
\begin{align*}
\hat{X}(t)&=X_0+\sum_i \left( \phi\left(f\Delta L(\tau_i),
Y_{\gamma^0(\tau_i
-)},\frac{\gamma^0(t)\wedge\gamma^0(\tau_i)-\gamma^0(\tau_i-)}{|\Delta
L(\tau_i)|^2}\right)-Y_{\gamma^0(\tau_i -)}-f(Y_{\gamma^0(\tau_i
-)})
\Delta L(\tau_i)\right)\\
&\quad \times I_{[\gamma^0(\tau_i
-),+\infty)}(\gamma^0(t))+\int_0^{\gamma^0(t)} b(Y(s))
d\varsigma^0(s)+\int_0^{\gamma^0(t)}f(Y(s))dL_{\varsigma^0(s)}\\
&\quad +\frac{1}{2}\int_0^{\gamma^0(t)}ff'(Y(s))d[L]^c_{\varsigma^0(s)}\\
&=X_0+\sum_i \left( \phi(f\Delta L(\tau_i), Y_{\gamma^0(\tau_i -)},
1)-Y_{\gamma^0(\tau_i -)}-f(Y_{\gamma^0(\tau_i -)})\Delta L(\tau_i)
\right) I_{[\gamma^0(\tau_i -),+\infty)}(\gamma^0(t))\\
&\quad +\int_0^t b(\hat{X}(s))
ds+\int_0^tf(\hat{X}(s-))dL(s)+\frac{1}{2}\int_0^{t}ff'(\hat{X}(s))d[L]^c(s)\\
&=X_0+\sum_{0< s \leq t} \left( \phi(f \Delta L(s), \hat{X}(s-), 1)-
\hat{X}(s-)-f
(\hat{X}(s-))\Delta L(s) \right)+\int_0^tb(\hat{X}(s))ds\\
& \quad
+\int_0^tf(\hat{X}(s-))dL(s)+\frac{1}{2}\int_0^{t}ff'(\hat{X}(s))d[L]^c(s)\\
& =X_0+ \int_0^t b(\hat{X}(s))ds + \int_0^tf(\hat{X}(s-))\diamond  d
L(s),
\end{align*}
where in the last equality, we have used the alternative definition
of a Marcus canonical equation in \eqref{m1}. So, $\hat{X}$ is the
solution to the equation \eqref{mainsde}, which, together with the
uniqueness of the solution to the equation \eqref{mainsde}, implies
that $\hat{X}\equiv X$.

\end{proof}

With all the lemmas as above, we are finally ready to prove Theorem~\ref{maintheorem}.

\textbf{Proof of Theorem~\ref{maintheorem}.} It follows from Lemma~\ref{L1} that $Y^\epsilon$ converges in probability to
$Y$ under the compact uniform topology. Moreover, it follows from Lemma~\ref{L3} that $X^\epsilon$ converges in
probability to $\hat{X}$ under the Skorokhod $M_1$-topology. The theorem then follows from Lemma~\ref{L4}, which asserts $\hat{X}\equiv X$. \qed

%% {\bf Acknowledgement.} We would like to thank ??? for helpful
%%discussions and comments.

\section*{Appendix} \appendix

\renewcommand{\theequation}{\thesection.\arabic{equation}}

%%%%%%%%%%%%%%%%%%%%%%%%%%%%%%%%%%%%%%%%%%%%%%%%%%%%%%%
\section{Skorokhod topologies} \label{skorokhod}

Throughout this section, we fix $T > 0$.

The following $J_1$-metric has been defined by
Skorokhod~\cite{Skorokhod56}:
\[
d_{J_1, T}(x,y)=\inf_{\lambda\in \Lambda}\left\{ \sup_{0\leq t \leq
T}|x(t)-y((\lambda(t))|+\sup_{ s,t\in[0,T],s\neq t} \left|\log
\frac{\lambda(s)-\lambda(t)}{s-t}\right| \right\}, \quad x, y \in D([0, T], \R),
\]
where $\Lambda$ is the set of all the strictly increasing continuous functions mapping $[0,T]$ onto itself. The topology on $D([0, T], \R)$ induced by the $J_1$-metric is called the Skorokhod $J_1$-topology.

Skorokhod~\cite{Skorokhod56} also defined the $M_1$-metric using the
notion of \emph{completed graph} of a function. More precisely, for any $x\in D([0,T],\R)$, the completed graph of $x$, denoted by $\Gamma_x$, is
defined as
\[
\Gamma_x :=\{(t,z)\in [0,T]\times \R: z\in[[x(t-),x(t) ]]\},
\]
where $x(0-)$ is interpreted as $x(0)$, $[[z_1,z_2 ]]$ is the line segment connecting $z_1$ and $z_2$, i.e.,
\[
[[z_1,z_2 ]]=\{z\in \R: z=a z_1+(1-a)z_2 \text{ for some }
a \in[0,1] \}.
\]
Note that $\Gamma_x$ can be parametrically represented by the following continuous function
\[
(r,u): [0,1] \rightarrow \Gamma_z, \quad (r,u)(0)=(0,z(0)),
(r,u)(1)=(T,z(T)),
\]
which is nondecreasing with respect to the following order on $\Gamma_x$:
\[
(t_1,z_1)\leq (t_2,z_2) \Leftrightarrow  t_1 < t_2 \quad \text{or}
\quad (t_1=t_2 \text{ and } |x(t_1-)-z_1| \leq |x(t_2 -)-z_2|).
\]
Skorokhod~\cite{Skorokhod56} defined the $M_1$-metric as follows:
\[
\hspace{-2mm} d_{M_1,T}(x,y)=\inf_{(r_1,u_1)\in \Pi(x),(r_2,u_2)\in \prod(y)}\{
\sup_{0\leq t \leq 1}|r_1(t)-r_2(t)|+\sup_{0\leq t \leq 1}
|u_1(t)-u_2(t)| \}, \quad x, y \in D([0, T], \R),
\]
where $\Pi(\cdot)$ denotes the set of all parametric representations of an element in $D([0, T], \R)$. The topology on $D([0, T], \R)$ induced by the $M_1$-metric is called the Skorokhod $M_1$-topology.

Noting that the limit of a sequence of continuous functions under either the uniform or the Skorokhod $J_1$-topology is continuous, we remark that, when approximating a c\`{a}dl\`{a}g function using continuous functions, the Skorokhod $M_1$-topology can be particularly useful. For example, for any $n \geq 1$, let
$$
x(t)=I_{[1/2, 1]}(t), \quad x^n(t)=n(t-1/2+1/n)I_{[1/2-1/n, 1/2]}(t)+I_{[1/2, 1]}(t), \quad 0 \leq t \leq 1.
$$
One can verify that, as $n$ tends to infinity, $x^n(t) \rightarrow x(t)$ in $D([0,1],\R)$ under Skorokhod $M_1$-topology but not under the Skorokhod $J_1$-topology.

The following theorem is well known; see, e.g., ~\cite[Theorem 3.2]{PR15}.
\begin{theorem} \label{p3}
Let $\{X^n : n \geq 1\}$ be a sequence of $D([0,T],\R)$-valued random elements. $\{X^n\}$ converges in probability to $X$ under the Skorokhod $M_1$-topology if and only if
\begin{enumerate}
\item[1)] for any $t \in [0, T]$, $X^n(t)$ converges in probability to $X(t)$;
\item[2)] and for any fixed $\Delta>0$,
$$
\lim_{\delta \rightarrow 0+} \limsup_n \PX(w'(X^n,\delta) > \Delta )=0,
$$
where
\begin{equation} \label{w'}
w'(x,\delta):= \sup_{0\leq t_1 <t< t_2 \leq 1; t_2-t_1< \delta}
\inf_{a\in[0,1]}|x(t)-(ax(t_1)+(1-a)x(t_2))|.
\end{equation}
\end{enumerate}
\end{theorem}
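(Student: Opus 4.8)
The plan is to reduce the probabilistic statement to the purely deterministic characterization of $M_1$-convergence on $D([0,T],\R)$ and then run the classical Skorokhod-type argument pathwise. Since $(D([0,T],\R), d_{M_1,T})$ is a separable metric space, $X^n \to X$ in probability under the $M_1$-topology is equivalent to saying that every subsequence of $\{X^n\}$ admits a further subsequence along which $d_{M_1,T}(X^n, X) \to 0$ almost surely. Conditions 1) and 2) enjoy the same kind of stability: condition 1), via a diagonal extraction over a fixed countable dense set $\mathcal{Q} \subset [0,T]$ containing $0$ and $T$, is equivalent to the existence, along any subsequence, of a further subsequence on which $X^n(t) \to X(t)$ simultaneously for all $t \in \mathcal{Q}$ almost surely; and condition 2), being already a $\limsup$ statement, upgrades along a suitable sub-subsequence to $\lim_{\delta \to 0+}\limsup_n w'(X^n,\delta) = 0$ almost surely (pick the subsequence so that $\PX(w'(X^n,\delta_k) > 1/k)$ is summable for a fixed sequence $\delta_k \downarrow 0$). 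Hence it suffices to prove the deterministic equivalence: for $x_n, x \in D([0,T],\R)$, one has $d_{M_1,T}(x_n,x) \to 0$ if and only if (i) $x_n(t) \to x(t)$ for all $t$ in some dense subset of $[0,T]$ containing $0$ and $T$, and (ii) $\lim_{\delta \to 0+}\limsup_n w'(x_n,\delta)=0$, with $w'$ as in \eqref{w'}.

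For the sufficiency direction (i) and (ii) $\Rightarrow$ $d_{M_1,T}(x_n,x)\to 0$ — which I expect to be the technically heaviest step — I would argue as follows. Fix $\Delta > 0$ and choose $\delta$ with $\limsup_n w'(x_n,\delta) < \Delta$. Partition $[0,T]$ into finitely many subintervals of length $< \delta$ with endpoints in the dense set from (i); on each such subinterval the oscillation control (ii) forces $x_n$ (for large $n$) to stay within $\Delta$ of the chord joining its values at the two endpoints, while those endpoint values converge to the corresponding values of $x$. One then builds parametric representations $(r_n,u_n) \in \Pi(x_n)$ that agree with a fixed representation $(r,u) \in \Pi(x)$ of $x$ at the finitely many grid times up to an error controlled by the endpoint convergence, and that traverse the near-chord pieces monotonically; the monotonicity built into the completed graph $\Gamma_{x_n}$ is exactly what makes this interpolation legitimate. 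Letting $\Delta \to 0$ while refining the grid yields $d_{M_1,T}(x_n,x) \to 0$. This is the standard bookkeeping behind the $M_1$-theory; the only real work is organizing the grid refinement so that the two sources of error — the chord oscillation and the endpoint mismatch — are made simultaneously small.

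For the necessity direction $d_{M_1,T}(x_n,x)\to 0 \Rightarrow$ (i) and (ii): part (i) follows because the coordinate projection $\pi_t \colon (D([0,T],\R),M_1) \to \R$ is continuous at every $x$ that is continuous at $t$, and the endpoint projections $\pi_0, \pi_T$ are always continuous (a parametric representation begins at $(0,x(0))$ and ends at $(T,x(T))$), while a càdlàg $x$ is continuous at all but countably many $t$; so the continuity points of $x$ together with $\{0,T\}$ form an admissible dense set. For (ii) one proves a comparison estimate of the form $w'(x_n,\delta) \le w'\!\big(x, \delta + c\, d_{M_1,T}(x_n,x)\big) + c\, d_{M_1,T}(x_n,x)$, obtained by transporting a near-optimal triple $t_1 < t < t_2$ for $x_n$ through the parametric representations to a corresponding triple for $x$; this, combined with the elementary fact that $w'(x,\delta) \to 0$ as $\delta \to 0+$ for every fixed $x \in D([0,T],\R)$ (càdlàg functions are regulated, so locally they lie on a chord up to arbitrarily small error once a small neighbourhood of the finitely many jumps of appreciable size is removed), gives $\lim_{\delta \to 0+}\limsup_n w'(x_n,\delta)=0$.

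Reassembling, one applies the pathwise equivalence $\omega$ by $\omega$ along the subsequences from the first paragraph, and since those subsequence characterizations are two-sided, the equivalence between $M_1$-convergence in probability and conditions 1), 2) is recovered. The point that genuinely needs care — and which I regard as the main obstacle to a clean statement — is that in the necessity direction the pathwise argument only delivers $x_n(t) \to x(t)$ at the \emph{continuity points} of $x$; promoting this to ``for every $t \in [0,T]$'' as written in the theorem requires that $X$ be almost surely continuous at each fixed $t$. This is automatic whenever $X$ has no fixed discontinuities — in particular for every stochastically continuous process, which covers all the processes to which the theorem is applied in this paper — so I would either record this as a standing hypothesis or phrase condition 1) over a dense set of such times; identifying and disposing of this discontinuity issue is the one genuinely delicate point of the proof.
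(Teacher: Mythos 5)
First, be aware that the paper does not prove this statement at all: it is quoted as well known and attributed to \cite[Theorem 3.2]{PR15} (the deterministic backbone is Whitt's characterization of $M_1$-convergence, \cite{W2002}), so there is no in-paper argument to match. Your deterministic skeleton --- pointwise convergence on a dense set containing the endpoints plus vanishing oscillation $w'$ characterizes $d_{M_1,T}$-convergence --- is the right one. The genuine gap is in your probabilistic reduction of the sufficiency direction. Condition 2) gives, for each $k$, a scale $\delta_k$ with $\limsup_n \PX(w'(X^n,\delta_k)>1/k)<2^{-k}$; this bounds each individual probability along the tail of any subsequence by $2^{-k}$, but it does \emph{not} make $\sum_j \PX(w'(X^{n_j},\delta_k)>1/k)$ finite at the \emph{fixed} scale $\delta_k$, because these probabilities need not tend to $0$ in $n$. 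So Borel--Cantelli cannot deliver the almost-sure statement $\limsup_j w'(X^{n_j},\delta_k)\le 1/k$. The diagonal events $\{w'(X^{n_j},\delta_j)>1/j\}$ do have summable probabilities, but since $w'(x,\cdot)$ is nondecreasing, controlling $w'$ at the shrinking scale $\delta_j$ for the $j$-th term says nothing about $\limsup_j w'(X^{n_j},\delta_k)$ at a fixed $\delta_k$. Subsequences along which the deterministic hypotheses hold pathwise do exist, but only as a consequence of the theorem being proved (via the necessity direction applied to an a.s.\ convergent subsequence), so invoking them here is circular. The standard repair is to drop the pathwise reduction in this direction: prove a quantitative deterministic lemma of the form $d_{M_1,T}(x,y)\le C\bigl(\max_i|x(t_i)-y(t_i)|+w'(x,\delta)+w'(y,\delta)\bigr)$ for a $\delta$-grid $\{t_i\}$ (via piecewise-linear interpolants), bound $\PX(d_{M_1,T}(X^n,X)>\Delta)$ by finitely many probabilities each handled by 1), by 2), or by the fact that $w'(X,\delta)\to 0$ a.s.\ as $\delta\to 0$, and take $\limsup_n$ followed by $\delta\to 0$; alternatively, show $(X^n,X)$ converges weakly in $D\times D$ to the diagonal element $(X,X)$ and apply the continuous mapping theorem to $d_{M_1,T}$.

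Your second observation --- that necessity only yields $X^n(t)\to X(t)$ in probability at times where $X$ is a.s.\ continuous --- is correct, and it is in fact a defect of the statement as transcribed rather than of your argument: the deterministic example $x_n=I_{[1/2+1/n,1]}$, $x=I_{[1/2,1]}$ satisfies $d_{M_1,1}(x_n,x)\to 0$ while $x_n(1/2)=0\not\to 1=x(1/2)$, so the ``only if'' half fails at a fixed discontinuity of the limit. The correct formulation restricts condition 1) to a dense set of times containing the endpoints, or assumes $X$ has no fixed discontinuities. This is harmless for the paper, which invokes only the sufficiency direction (in Lemma~\ref{L2.1}, for $V^\epsilon\to[L]^d$ with $L$ stochastically continuous), and you were right to flag it; but the more consequential problem in your write-up is the Borel--Cantelli step above.
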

%%%%%%%%%%%%%%%%%%%%%%%%%%%%%%%%%%%%%%%%%%%%%%%%%%%%%%%%%%%%%%%%%%%

%%%%%%%%%%%%%%%%%%%%%%%%%%%%%%%%%%%%%%%%%%%%%%%%%%%%%%%%%%%%%%%%%

\end{document}